\numberwithin{equation}{section}
\newtheorem{teo}{Theorem}[section] 
\newtheorem{lem}[teo]{Lemma}
\newtheorem{defi}[teo]{Definition}
\newtheorem{oss}[teo]{Remark}
\newtheorem{prop}[teo]{Proposition} 
\newtheorem{conjecture}[teo]{Conjecture}
\title{One remark on the b-semiampleness of the moduli part}
\date{\today}
\author{Enrica Floris}
\address{Enrica Floris\\
Department of Mathematics, Imperial College London, 180 Queen's
Gate, London SW7 2AZ, UK}
\email{e.floris@imperial.ac.uk}
\begin{document}
\begin{abstract}
In this short note we reduce the b-semiampleness conjecture for lc-trivial fibrations to
the b-semiampleness conjecture for klt-trivial fibrations.
\end{abstract}

\maketitle

\section{Introduction}
An important problem in birational geometry is the following:
given a morphism $f\colon X\rightarrow Z$ and an lc pair $(X,B)$
such that $K_X+B\sim_{\mathbb{Q}f}0$, find a $\mathbb{Q}$-divisor
$\Delta_Z$ on $Z$ such that $(Z,\Delta_Z)$ is lc and
$$K_X+B\sim_{\mathbb{Q}}f^{\ast}(K_Z+\Delta_Z).$$
The problem has been solved by Ambro when $(X,B)$ is klt
and $B\geq 0$ by using the canonical bundle formula ~\cite[Theorem 0.2]{Amb05}
and by Fujino and Gongyo when $f$ is generically finite and $(X,B)$ is lc by ~\cite[Lemma 1.1]{FGsubba}.\\
In general, we write canonical bundle formulas for lc-trivial (resp. klt-trivial
fibrations), that are fibrations $f\colon(X,B)\rightarrow Z$
such that $(X,B)$ is lc (resp. klt) on the generic point of $Z$
and $K_X+B\sim_{\mathbb{Q}f}0$ (see Definition \ref{lctrivialfibr} for a precise definition).
The canonical bundle formula consists in writing 
$$K_X+B\sim_{\mathbb{Q}}f^{\ast}(K_Z+B_Z+M_Z)$$
where $B_Z$ is called \textit{discriminant} and contains information on the singular fibers of $f$
and $M_Z$ is called the \textit{moduli part} and, conjecturally,
is related to the birational variation of the fibers.
The main steps in the proof of ~\cite[Theorem 0.2]{Amb05} are the following:
\begin{itemize}
\item the pair $(Z,B_Z)$ has the same singularities as $(X,B)$ ~\cite[Proposition 3.4]{Amb99}, namely
\begin{center}
 $(Z,B_Z)$ is klt (resp. lc) $\Leftrightarrow$ $(X,B)$ is klt (resp. lc);
\end{center}
\item if $(X,B)$ is klt on the generic point of $Z$, then $M_Z$
is the pullback of a big divisor ~\cite[Theorem 3.3]{Amb05} or, more precisely,
there exist a generically finite morphism $\tau\colon\bar{Z}\rightarrow Z$,
a morphism $\rho\bar{Z}\rightarrow \colon Z^{!}$ and a big divisor $M_{Z^{!}}$
such that $$\tau^{\ast}M_Z=\rho^{\ast}M_{Z^{!}}.$$
\end{itemize}
Since klt is an open condition and the pair $(Z,B_Z)$ is klt,
it is sufficient to chose
$$\Delta_Z=B_Z+\frac{1}{m}\tau_{\ast}\rho^{\ast}D$$
where $D\in |mM_{Z^{!}}|$ and $m$ is big enough.

If we want to follow the path of this proof, in order to solve the problem for
$(X,B)$ is lc, we need a stronger condition on $M_Z$,
namely
\begin{conjecture}\label{bSemiLC}
Let $f\colon(X,B)\rightarrow Z$ be a lc-trivial fibration.
Then there exists a birational morphism $\mu\colon Z'\rightarrow Z$
such that $M_{Z'}$ is semiample.
\end{conjecture}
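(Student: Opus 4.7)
The strategy is a reduction to the (hypothetical) klt-trivial case via adjunction to an lc center that dominates the base, combined with induction on the relative dimension of $f$.

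The base case is when $(X,B)$ is klt on the generic fiber of $f$: here $f$ is in fact a klt-trivial fibration and the conclusion follows from the klt version of the conjecture. Assume therefore that some lc place of $(X,B)$ is horizontal over $Z$. After replacing $f \colon (X,B) \to Z$ by a suitable log resolution (which leaves the moduli b-divisor unchanged, as it is a birational invariant), I may assume $(X,B)$ is log smooth and that $B^{=1} = \lfloor B \rfloor$ contains a prime component $S$ dominating $Z$.

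Next I would apply divisorial adjunction along $S$ to obtain $K_S + B_S = (K_X + B)\vert_S$, where $B_S$ is the different and $(S,B_S)$ is again lc. Taking the Stein factorization $f\vert_S = \tau \circ f_S$ with $f_S \colon S \to \bar{Z}$ of connected fibers and $\tau \colon \bar{Z} \to Z$ generically finite, one obtains an lc-trivial fibration $f_S \colon (S,B_S) \to \bar{Z}$ whose relative dimension is strictly smaller than that of $f$. The compatibility of the canonical bundle formula with adjunction to a component of coefficient one (in the spirit of the results of Kawamata and Ambro) should give $\tau^{\ast} M_Z \sim_{\mathbb{Q}} M_{\bar{Z}}$ on appropriate birational models. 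By induction on the relative dimension, $M_{\bar Z}$ is b-semiample, and descent of b-semiampleness along the generically finite morphism $\tau$ then delivers a birational model $\mu \colon Z' \to Z$ on which $M_{Z'}$ is semiample.

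The main technical obstacle is to make the transformation of the moduli b-divisor under adjunction along $S$ precise: one must match the canonical bundle formulas for $(X,B)$ and $(S,B_S)$ over coherent birational models of $Z$ and $\bar Z$, tracking both the discriminant and moduli parts so that the identity $\tau^{\ast} M_Z \sim_{\mathbb{Q}} M_{\bar Z}$ is genuinely realized rather than merely up to some unspecified correction. A secondary point is that the new pair $(S,B_S)$ may itself fail to be klt on the generic fiber of $f_S$, so the induction must be set up on the relative dimension (which strictly decreases) rather than on any measure of the singularities, until one eventually reaches a fibration that is klt on the generic fiber and the hypothesis applies.
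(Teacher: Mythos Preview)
Your strategy --- reduce to the klt case via adjunction to a horizontal lc center and then descend semiampleness along the resulting generically finite map to $Z$ --- is the paper's, but the two steps you leave open are precisely its content. For the compatibility $\tau^{\ast}M_Z\sim_{\mathbb Q}M_{\bar Z}$: you restrict to a divisor $S\subseteq\lfloor B\rfloor$ and induct on relative dimension, whereas the paper passes in one step to a center $W$ that is \emph{minimal} over the generic point of $Z$, so that adjunction already produces a klt pair. Either way, matching the moduli parts is equivalent to matching the discriminants, and there is no reason the log canonical thresholds defining the coefficients of $B_Z$ are realized near $S$; in general they are not, so the identity you want can fail on every birational model. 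The relevant input is not Kawamata--Ambro but Fujino--Gongyo (Theorem~\ref{fujigongyo} here): only after a specific \emph{generically finite} base change $Z'\to Z$ --- not merely ``appropriate birational models'' --- does one obtain $B_{Z'}^{\min}=B_{Z'}$ for the fibration coming from the minimal center, and hence $M_{Z'}^{\min}\sim_{\mathbb Q}M_{Z'}$. Your inductive variant would need the analogous statement at every divisorial step, which you do not supply.

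The descent step you treat as a formality is in fact the new contribution of the note: semiampleness does not descend along generically finite morphisms in general. The paper's argument runs as follows. Having first placed oneself on a model where the moduli b-divisor descends (Theorem~\ref{nefness}), Lemma~\ref{geq2} gives $\operatorname{codim}\operatorname{Bs}|mdM_Z|\geq 2$. Resolve this base locus by $\mu\colon\bar Z\to Z$, so that $\operatorname{Bs}|mdM_{\bar Z}|$ is the fixed part $F$, of pure codimension one if nonempty. Because $M_{\bar Z}=\mu^{\ast}M_Z$ (this is exactly the b-divisor property), one may pass to the fibre product of $Z''$ with $\bar Z$ and apply Lemma~\ref{geq2} again over $\bar Z$, obtaining $\operatorname{codim}\operatorname{Bs}|mdM_{\bar Z}|\geq 2$ once more, which forces $F=0$. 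Without the b-divisor descent this trick collapses, so ``descent of b-semiampleness along $\tau$'' is not a black box one can simply invoke.
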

We can also state the conjecture for klt-trivial fibrations (cf. ~\cite[Conjecture 7.13.3]{ProkShok}).
\begin{conjecture}\label{bSemiKLT}
Let $f\colon(X,B)\rightarrow Z$ be a klt-trivial fibration.
Then there exists a birational morphism $\mu\colon Z'\rightarrow Z$
such that $M_{Z'}$ is semiample.
\end{conjecture}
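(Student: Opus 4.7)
The plan is to approach Conjecture \ref{bSemiKLT} through the Hodge-theoretic origin of the moduli part, combined with the structural result \cite[Theorem 3.3]{Amb05} recalled in the excerpt. That theorem, after a generically finite cover $\tau\colon\bar Z\to Z$ and a morphism $\rho\colon\bar Z\to Z^{!}$, produces a big divisor $M_{Z^{!}}$ with $\tau^{\ast}M_Z=\rho^{\ast}M_{Z^{!}}$. It therefore suffices to find a birational model on which $M_{Z^{!}}$ is semiample: pulling back along $\rho$, taking a suitable multiple, and descending through $\tau$ on a common resolution then yields semiample $M_{Z'}$ on some birational model $\mu\colon Z'\to Z$.

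The second step would be to realise $M_{Z^{!}}$ as a Hodge-theoretic line bundle. Performing simultaneously a semistable reduction of $f\colon(X,B)\to Z$ and a base change killing the local monodromies of the variation of Hodge structure on the middle cohomology of the generic fiber, the moduli part $M_Z$ becomes a $\mathbb{Q}$-multiple of the determinant of the bottom piece of the Hodge filtration. Such Hodge line bundles are nef by the positivity theorems of Kawamata and Fujino, so after these reductions one knows that $M_{Z^{!}}$ is big and nef on a suitable compactification.

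The crucial remaining step is to upgrade ``big and nef'' to ``semiample''. The natural route is to identify $M_{Z^{!}}$, up to a rational multiple, with a CM-type line bundle on a projective coarse moduli space $\mathcal{M}$ parameterising the birational classes of the fibers of $f$ --- which, in the klt-trivial setting, are log Calabi-Yau pairs --- and to invoke semiampleness of this line bundle on $\mathcal{M}$. Descending semiampleness along $\rho$ and then up via $\tau$ would conclude the argument.

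The main obstacle is precisely this last identification. It requires, first, a projective coarse moduli theory for log Calabi-Yau pairs with the relevant numerical invariants, and second, a semiampleness statement for the Hodge/CM line bundle on such a moduli space. Both ingredients are available only in restricted settings --- fiber dimension one by Fujino--Mori, or families of abelian varieties and $K3$ surfaces by Fujino --- and in full generality they are themselves difficult open problems. This is the reason Conjecture \ref{bSemiKLT} has resisted a complete proof, and why the present note focuses instead on reducing the lc case to it rather than attacking it directly.
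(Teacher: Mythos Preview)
The statement you have been asked to prove is Conjecture~\ref{bSemiKLT}, and it is genuinely a conjecture: the paper does not prove it, and indeed the entire purpose of the note is to \emph{assume} Conjecture~\ref{bSemiKLT} and deduce Conjecture~\ref{bSemiLC} from it (Theorem~\ref{mainteo}). There is therefore no ``paper's own proof'' to compare your proposal against.

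Your write-up is not a proof either, and you say so yourself: the final paragraph correctly identifies the missing ingredients --- a projective coarse moduli space for the log Calabi--Yau fibers and semiampleness of the Hodge/CM line bundle on it --- as open problems outside the known cases (fiber dimension one, abelian or K3 fibers). What you have produced is a reasonable heuristic outline of the expected strategy, consistent with the discussion surrounding \cite{Amb05}, \cite{ProkShok} and \cite{Fuji} in the introduction, together with an honest assessment of where it breaks down. That is appropriate commentary on an open conjecture, but it should not be labelled a proof proposal; as a proof it has a declared and unrepaired gap at the decisive step.
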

Conjecture \ref{bSemiKLT} is true if the fiber of the klt-trivial fibration
is a curve by ~\cite[Theorem 8.1]{ProkShok} and if the fiber is isomorphic to a K3 surface or to an
abelian variety by ~\cite[Theorem 1.2]{Fuji}.
Moreover Conjecture \ref{bSemiLC} is true if $\dim Z=1$ by ~\cite[Theorem 3.5]{Amb05} in the klt case
and by ~\cite[Theorem 1.4]{EFindapp}.

In ~\cite{FG} the authors prove that,
for an lc-trivial fibration, the moduli part is the pull-back of a big divisor.
They consider a center $W$ of $(X,B)$
that is minimal among the centers such that $f(W)=Z$
and they find a quasi-finite morphism $Z''\rightarrow Z$
such that
\begin{enumerate}
\item the restriction $f'\vert_{W'}$ defines a klt-trivial fibration;
\item if we set $M^{min}_{Z'}$ the moduli part of $f'\vert_{W'}$ then we have
$$M^{min}_{Z'}\sim_{\mathbb{Q}}M_{Z'}.$$
\end{enumerate}
By following the same approach as in ~\cite{FG}
we can prove the following result
\begin{teo}\label{mainteo}
Conjecture \ref{bSemiKLT} implies Conjecture \ref{bSemiLC}.
\end{teo}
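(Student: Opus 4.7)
The plan is to combine the reduction of \cite{FG} from an lc-trivial fibration to a klt-trivial one with Conjecture \ref{bSemiKLT} and a descent of semiampleness along generically finite covers.

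Starting with an lc-trivial fibration $f\colon(X,B)\to Z$, by the construction of \cite{FG} one chooses a minimal lc center $W$ of $(X,B)$ with $f(W)=Z$ and obtains, after passing to suitable birational models, a quasi-finite morphism $\sigma\colon Z''\to Z$ and a klt-trivial fibration $f'|_{W'}\colon W'\to Z''$ whose moduli part $M^{min}_{Z''}$ satisfies $M^{min}_{Z''}\sim_{\mathbb{Q}}\sigma^{\ast}M_Z$. (If $(X,B)$ is already klt on the generic point of $Z$ there is nothing to prove, so I assume that a proper minimal lc center dominating $Z$ exists.) Applying Conjecture \ref{bSemiKLT} to $f'|_{W'}$ produces a birational morphism $\nu\colon\widetilde{Z}\to Z''$ such that $M^{min}_{\widetilde{Z}}$ is semiample. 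Since the FG equivalence is a relation between moduli b-divisors and is therefore preserved by further birational pullback, this yields $(\sigma\circ\nu)^{\ast}M_Z\sim_{\mathbb{Q}}M^{min}_{\widetilde{Z}}$, so $(\sigma\circ\nu)^{\ast}M_Z$ is semiample on $\widetilde{Z}$.

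The remaining and technically most delicate step is to descend semiampleness from $\widetilde{Z}$ to a birational modification of $Z$. After compactifying and normalising one may factor the generically finite morphism $\sigma\circ\nu$ via its Stein factorisation as a birational morphism $\mu\colon Z'\to Z$ followed by a finite surjective morphism $\pi\colon\widetilde{Z}\to Z'$, with $\pi^{\ast}M_{Z'}$ semiample. To conclude that $M_{Z'}$ itself is semiample I would pass to the Galois closure of $\pi$ and take norms of base-point-free sections of a sufficiently divisible multiple of $\pi^{\ast}M_{Z'}$; averaging under the Galois action descends these to base-point-free sections of a multiple of $M_{Z'}$ on $Z'$, yielding Conjecture \ref{bSemiLC}. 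The main obstacle in making this precise is the bookkeeping between the various birational models: one must pass to a common sufficiently high model of $Z$ and exploit the birational invariance of the moduli b-divisor so that the $\mathbb{Q}$-linear equivalences above, which a priori hold only after unspecified blow-ups, really survive on the same model on which the descent is carried out.
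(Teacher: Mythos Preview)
Your overall strategy coincides with the paper's: reduce to a klt-trivial fibration via the Fujino--Gongyo construction (Theorem~\ref{fujigongyo}), apply Conjecture~\ref{bSemiKLT}, and obtain semiampleness of the moduli part on a variety sitting generically finitely over $Z$. The bookkeeping you flag in your last paragraph is dealt with exactly as the paper does, by first invoking Theorem~\ref{nefness} to stabilise $M_Z$ as a b-divisor and Proposition~\ref{basechange} for compatibility under generically finite base change; after this reduction one works directly on $Z$ rather than on a further model.

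Where you diverge is in the descent of semiampleness along the generically finite map $\widetilde{Z}\to Z$. You propose a Stein factorisation followed by a norm/Galois-averaging argument along the finite part. Note first that you have the Stein factorisation the wrong way round: it produces $\widetilde{Z}\to V\to Z$ with the \emph{first} arrow birational (connected fibres) and the \emph{second} finite, not a birational map $Z'\to Z$ followed by a finite $\widetilde{Z}\to Z'$. Your argument can be salvaged with the correct order --- normality of $V$ gives $H^0(\widetilde{Z},m\,\cdot)\cong H^0(V,m\,\cdot)$, so semiampleness descends to $V$, and then a norm map along the finite morphism $V\to Z$ yields base-point-freeness of a multiple of $M_Z$ --- but this relies on the machinery of norms of line bundles under finite morphisms. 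The paper takes a more elementary route via Lemma~\ref{geq2}: if $\tau$ is generically finite of degree $d$ and $\tau^{\ast}D$ is base-point-free, then ${\rm Codim}({\rm Bs}|dD|)\geq 2$. One applies this on $Z$, resolves the base locus by $\mu\colon\bar{Z}\to Z$ so that the fixed part $F$ becomes purely divisorial, base-changes the whole diagram to $\bar{Z}$, and applies the lemma again; since $|mdM_{\bar{Z}}|=\mu^{\ast}|mdM_Z|$, this forces $F=0$ and hence $|mdM_Z|$ is already free. Your approach is closer to the standard descent folklore and would also work once the Stein slip is corrected; the paper's trick trades that machinery for two invocations of an elementary codimension estimate.
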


\section{Definitions and known results}
We will work over $\mathbb{C}$.
In the following $\equiv$, $\sim$ and $\sim_{\mathbb{Q}}$ will respectively indicate
numerical, linear and $\mathbb{Q}$-linear equivalence of divisors.

\begin{defi}
Let $(X,B)$ be a pair and $\nu\colon X'\rightarrow X$
a log resolution of the pair.
We set $$A(X,B)=K_{X'}-\nu^{\ast}(K_X+B)$$
and $$A^{\ast}(X,B)=A(X,B)+\sum_{a(E,X,B)=1} E.$$
\end{defi}

\begin{defi}\label{lctrivialfibr}
A klt-trivial (resp. lc-trivial) fibration $f \colon (X,B) \rightarrow Z$ consists of a 
surjective morphism with connected fibers
of normal varieties $f \colon X \rightarrow Z$ and of a log pair $(X,B)$ satisfying
the following properties:
\begin{enumerate}
\item $(X,B)$ has klt (resp. lc) singularities over the generic
point of $Z$;
\item ${\rm rank}\, f'_{\ast}\,\mathcal{O}_X(\lceil A(X,B)\rceil) = 1$
(resp. ${\rm rank}\, f'_{\ast}\,\mathcal{O}_X(\lceil A^{\ast}(X,B)\rceil) = 1$)
where $f'=f\circ\nu$ and $\nu$ is a given log resolution of the pair $(X,B)$;
\item there exists a positive integer $r$, a rational function $\varphi\in \mathbb{C}(X)$
and a $\mathbb{Q}$-Cartier divisor $D$ on $Z$ such that
$$K_X + B +\frac{1}{r}(\varphi) = f^{\ast}D.$$
\end{enumerate}
\end{defi}

\begin{oss}\label{cartindex}
{\rm
The smallest possible $r$ that can appear in Definition \ref{lctrivialfibr} is the minimum of the set
$$\{m\in\mathbb{N}|\,m(K_X+B)|_F\sim 0\}$$
that is the Cartier index of the fiber.
We will always assume that the $r$ that appears in the formula is the smallest one.
}
\end{oss}

\begin{defi}
Let $P\subseteq Z$ be a prime Weil divisor.
The log canonical threshold $\gamma_P$ of $f^{\ast}(P)$ with respect to the pair $(X,B)$ is defined as follows.
Let $\bar{Z}\rightarrow Z$ be a resolution of $Z$.
Let $\mu\colon\bar{X}\rightarrow X$ be the birational morphism obtained as a desingularisation of the main component
of $X\times_Z\bar{Z}$. Let $\bar{f}\colon\bar{X}\rightarrow\bar{Z}$.
Let $\bar{B}$ be the divisor defined by the relation
$$K_{\bar{X}}+\bar{B}=\mu^{\ast}(K_X+B).$$
Let $\bar{P}$ be the strict transform of $P$ in $\bar{Z}$.
Set
$$\gamma_P=\sup\{t\in\mathbb{Q}|\,(\bar{X},\bar{B}+t\bar{f}^{\ast}(\bar{P})) {\rm \:is\: lc\: over\:} \bar{P}\}.$$
We define the {\rm discriminant} of $f \colon (X,B) \rightarrow Z$ as
\begin{eqnarray}\label{discriminant}
B_Z&=&\sum_{P}(1-\gamma_P)P.
\end{eqnarray}
\end{defi}
\begin{oss}[~\cite{Kaw2}, p.14 ~\cite{Amb99}]\label{fibraSNC}
{\rm
The log canonical threshold $\gamma_P$ is a rational number and the sum above is finite,
thus the discriminant is a $\mathbb{Q}$-Weil divisor
}
\end{oss}
\begin{defi}
Fix $\varphi\in\mathbb{C}(X)$ such that $K_X + B +\frac{1}{r}(\varphi) = f^{\ast}D$.
Then there exists a unique divisor $M_Z$ such that we have
\begin{eqnarray}\label{cbf}
K_X + B +\frac{1}{r}(\varphi) &=& f^{\ast}(K_Z+B_Z+M_Z)
\end{eqnarray}
where $B_Z$ is as in (\ref{discriminant}).
The $\mathbb{Q}$-Weil divisor $M_Z$ is called the {\rm moduli part}.
\end{defi}
We have the two following results.\\
\begin{teo}[Theorem 0.2 ~\cite{Amb04}, ~\cite{Corti}]\label{nefness}
Let $f \colon(X,B)\rightarrow Z$ be an lc-trivial fibration.
Then there exists a proper birational morphism
$Z'\rightarrow Z$ with the following properties:
\begin{description}
\item[(i)] $K_{Z'}+B_{Z'}$ is a $\mathbb{Q}$-Cartier divisor, 
and for every proper birational morphism $\nu \colon Z''\rightarrow Z'$
$$\nu^{\ast}(K_{Z'}+B_{Z'}) = K_{Z''}+B_{Z''}.$$
\item[(ii)] $M_{Z'}$ is a nef $\mathbb{Q}$-Cartier divisor and 
for every proper birational morphism $\nu \colon Z''\rightarrow Z'$
$$\nu^{\ast}(M_{Z'}) = M_{Z''}.$$
\end{description}
\end{teo}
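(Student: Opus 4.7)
The plan is to reduce the theorem to the nefness of Hodge bundles over a suitable birational model of $Z$. I would proceed in three stages: choose a model on which the data of $f$, $B$, and the discriminant locus are in standard position; verify the b-descent properties (i) and (ii) by formal manipulations once such a model is fixed; and then prove nefness of $M_{Z'}$ via a variation-of-Hodge-structure argument after semistable reduction.

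For the first two points, I start by replacing $X \to Z$ by a birational model on which $X$ and $Z$ are smooth, $\mathrm{Supp}(B) \cup f^{-1}(\Sigma)$ is simple normal crossings, and $f$ is smooth over the complement of a simple normal crossings divisor $\Sigma \subset Z$; this requires only log resolution and flattening. On such a model $\gamma_P$ is directly computed on $X$, and the b-descent of $K_{Z'}+B_{Z'}$ in part (i) follows from functoriality of log canonical thresholds under birational pullback: for any higher model $\nu : Z'' \to Z'$, each $\gamma_{P''}$ for $P''$ exceptional over $Z'$ is computed from the same fixed datum $(X,B)$, and the identity $\nu^{\ast}(K_{Z'}+B_{Z'})=K_{Z''}+B_{Z''}$ follows. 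Once (i) holds, (ii) is an immediate consequence of the defining equation (\ref{cbf}): the left-hand side of (\ref{cbf}) is a fixed divisor on $X$ independent of the choice of model of $Z$, so pulling (\ref{cbf}) back by $\nu$ and subtracting the pullback of (i) gives $\nu^{\ast}M_{Z'}=M_{Z''}$, and $M_{Z'}$ is $\mathbb{Q}$-Cartier because $K_{Z'}+B_{Z'}$ is.

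The main obstacle is the nefness of $M_{Z'}$. I would take a generically finite base change $\tau : \widetilde{Z} \to Z'$ via Kawamata's covering construction that kills the unipotent monodromy of the local system associated to $f$ around each component of $\Sigma$. On a resolution $\widetilde{f} : \widetilde{X} \to \widetilde{Z}$ of $X \times_{Z'} \widetilde{Z}$ one obtains a family that is semistable in codimension one. On this semistable model, $\tau^{\ast}M_{Z'}$ can be identified, up to a universally defined twist, with the determinant of the Deligne canonical extension of the bottom piece of the Hodge filtration of the variation of Hodge structure $R^{d}\widetilde{f}_{\ast}\mathbb{C}$ (where $d=\dim X - \dim Z$). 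By the Fujita--Kawamata semipositivity theorem this determinant is nef; hence $\tau^{\ast}M_{Z'}$ is nef, and nefness descends along the finite surjective morphism $\tau$, giving nefness of $M_{Z'}$. The hard part here is verifying that the Hodge-theoretic identification of the moduli part, which is classical when $(X,B)$ is klt over the generic point of $Z$, extends to the lc-trivial setting, where the components of $B$ with coefficient one contribute reduced boundary and one must work with the mixed Hodge structure on the complement of that reduced boundary; in particular, one has to replace $A(X,B)$ by $A^{\ast}(X,B)$ throughout and check that the relevant semipositivity theorem still applies.
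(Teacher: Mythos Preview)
The paper does not give its own proof of this statement: Theorem~\ref{nefness} is quoted from \cite{Amb04} and \cite{Corti} and is used as a black box in the argument for Theorem~\ref{mainteo}. So there is no in-paper proof to compare against.

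That said, your outline is broadly the strategy carried out in the cited references. A few points deserve more care if you want the sketch to stand on its own. First, the b-descent of $B_{Z'}$ is not quite a formal consequence of ``functoriality of log canonical thresholds'': one really uses that after the log-smooth reduction the fibration is log smooth over $Z'\setminus\Sigma$ and toroidal along $\Sigma$, so that passing to any higher model $Z''\to Z'$ introduces no new jumping of the lct; this is the content of the inverse-of-adjunction computation in \cite{Amb04}. Second, in the nefness step the identification of $M_{Z'}$ with (the lowest piece of) a canonical extension is not merely ``up to a universally defined twist'': in the lc case one has to replace the usual cyclic cover attached to $r(K_X+B)$ by the one built from $A^{\ast}(X,B)$, and check that the resulting variation carries a \emph{mixed} Hodge structure whose graded pieces are admissible so that the Fujita--Kawamata--Zuo type semipositivity applies; this is exactly the technical upgrade supplied in \cite{Corti} over Ambro's original klt argument, and it is where the hypothesis $\mathrm{rank}\, f'_{\ast}\mathcal{O}_X(\lceil A^{\ast}(X,B)\rceil)=1$ from Definition~\ref{lctrivialfibr} is used. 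Finally, nefness does descend along a generically finite surjective $\tau$, but you should note that this uses the projection formula together with the fact that $\tau_{\ast}\mathcal{O}_{\widetilde{Z}}$ contains $\mathcal{O}_{Z'}$ as a direct summand (or, equivalently, intersect with a curve and pull back).
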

\begin{prop}[Proposition 5.5 ~\cite{Amb04}]\label{basechange}
Let $f\colon (X,B)\rightarrow Z$ be an lc-trivial fibration.
Let $\tau\colon Z'\rightarrow Z$ be a generically finite projective
morphism from a non-singular variety $Z'$. Assume there exists a simple
normal crossing divisor $\Sigma_{Z'}$ on $Z'$ which contains $\tau^{-1}\Sigma_Z$ and the
locus where $\tau$ is not \'etale. Let $M_{Z'}$ be the moduli part of the induced
lc-trivial fibration $f'\colon (X',B')\rightarrow Z'$.
Then $M_{Z'}=\tau^{\ast}M_Z$.
\end{prop}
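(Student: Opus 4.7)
The plan is to pull back the canonical bundle formula of $f$ via a suitable morphism $\mu\colon X'\to X$, compare the resulting identity on $X'$ with the canonical bundle formula for the induced fibration $f'\colon(X',B')\to Z'$, and then reduce the statement to the crepancy $K_{Z'}+B_{Z'}=\tau^{\ast}(K_Z+B_Z)$ of the discriminant under base change.

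First I would realise $X'$ as a desingularisation of the main component of $X\times_Z Z'$, so that the induced morphisms satisfy $f\circ\mu=\tau\circ f'$. By the construction of the induced fibration, $B'$ is defined by $K_{X'}+B'=\mu^{\ast}(K_X+B)$ and one can take $\varphi'=\varphi\circ\mu$ in the canonical bundle formula for $f'$, so that pulling back
\[K_X+B+\tfrac{1}{r}(\varphi)=f^{\ast}(K_Z+B_Z+M_Z)\]
via $\mu$ yields $K_{X'}+B'+\tfrac{1}{r}(\varphi')=(f')^{\ast}\tau^{\ast}(K_Z+B_Z+M_Z)$. Comparing with the canonical bundle formula $K_{X'}+B'+\tfrac{1}{r}(\varphi')=(f')^{\ast}(K_{Z'}+B_{Z'}+M_{Z'})$ for $f'$, and using that pullback by the surjective morphism $f'$ is injective on $\mathbb{Q}$-Cartier divisors, one obtains the identity of $\mathbb{Q}$-Weil divisors
\[K_{Z'}+B_{Z'}+M_{Z'}=\tau^{\ast}(K_Z+B_Z)+\tau^{\ast}M_Z\]
on $Z'$.

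It therefore suffices to prove the crepancy $K_{Z'}+B_{Z'}=\tau^{\ast}(K_Z+B_Z)$. Writing $K_{Z'}=\tau^{\ast}K_Z+R_{\tau}$ with $R_\tau$ the (effective) ramification divisor of $\tau$, this reduces to the coefficient-wise identity $B_{Z'}=\tau^{\ast}B_Z-R_{\tau}$ on every prime divisor $P'\subset Z'$. For a prime $P'$ with $P:=\tau(P')$ of codimension one in $Z$ and ramification index $e$, I would use the SNC hypothesis on $\Sigma_{Z'}$ (which contains the branch locus) to localise at the generic point of $P'$: there $\tau$ is a standard cyclic cover and $\tau^{\ast}P=eP'$ disjoint from the other branches over $P$. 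Pulling a log resolution used to compute $\gamma_P$ back through the induced morphism $\mu$ produces a log resolution computing $\gamma_{P'}$, from which one reads off $\gamma_{P'}=e\,\gamma_P$; the arithmetic identity
\[1-\gamma_{P'}=e(1-\gamma_P)-(e-1)\]
then shows that the coefficient of $P'$ in $B_{Z'}$ matches the coefficient of $P'$ in $\tau^{\ast}B_Z-R_\tau$. A $\tau$-exceptional prime $P'$ (with $\tau(P')$ of codimension at least two in $Z$) is handled analogously: $\tau^{\ast}B_Z$ has coefficient zero along $P'$, while the local model of $\tau$ forces $1-\gamma_{P'}=-\mathrm{ord}_{P'}(R_\tau)$, matching again.

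The main obstacle is the careful book-keeping in the log-canonical-threshold comparison: one must verify that all additional exceptional divisors introduced when passing to the models $\bar X$ and $\bar X'$ used to define $\gamma_P$ and $\gamma_{P'}$ contribute compatibly to both sides, so that $\gamma_{P'}$ really equals what the naive local diagonalisation of $\tau$ predicts. Precisely this is the role of the SNC assumption on $\Sigma_{Z'}$: combined with the invariance of log discrepancies under crepant birational morphisms, it concentrates the ramification of $\tau$ along boundary components of a common nice model on which the lct computation becomes a straightforward change of variables, and so delivers the desired crepancy $B_{Z'}=\tau^{\ast}B_Z-R_\tau$ that completes the proof.
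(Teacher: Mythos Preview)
The paper does not give its own proof of this proposition: it is quoted from \cite[Proposition~5.5]{Amb04} and stated without argument, as one of the background results invoked later. There is therefore no in-paper proof to compare your proposal against.

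For what it is worth, your strategy is the standard one and is essentially Ambro's: reduce $M_{Z'}=\tau^{\ast}M_Z$ to the crepancy $K_{Z'}+B_{Z'}=\tau^{\ast}(K_Z+B_Z)$ by pulling back the canonical bundle formula along $\mu$, and then check this prime by prime via the behaviour of log canonical thresholds under ramified base change. The identity $\gamma_{P'}=e\,\gamma_P$ when $P=\tau(P')$ is a divisor is exactly the right computation, and the SNC hypothesis on $\Sigma_{Z'}$ is used precisely where you say. The one place your sketch is thin is the $\tau$-exceptional case: the sentence ``the local model of $\tau$ forces $1-\gamma_{P'}=-\mathrm{ord}_{P'}(R_\tau)$'' hides the actual content, since for a genuinely non-birational $\tau$ there is no single local normal form over a codimension-$\geq 2$ image. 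A clean way to fill this in is to pass through the Stein factorisation $Z'\to\tilde Z\to Z$: the finite part $\tilde Z\to Z$ has no exceptional divisors and is handled by your divisorial computation, while on the birational part $Z'\to\tilde Z$ the required identity along contracted primes is exactly the b-divisor stability of $K+B$ (Theorem~\ref{nefness}(i) in this paper), which also clarifies the implicit role of $\Sigma_Z$ in the statement.
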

The formula (\ref{cbf}) is called the \textit{canonical bundle formula}.

The following result is a well known result.
We recall the proof here for the reader's convenience.
\begin{lem}\label{geq2}
Let $\tau\colon W\rightarrow Y$ be a generically finite morphism.
Let $D$ be a divisor on $Y$ and assume that $\tau^{\ast}D$ is base point free.
Let $d=\deg\tau$.
Then $${\rm Codim}({\rm Bs}|dD|)\geq 2.$$
\end{lem}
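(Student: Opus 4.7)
The plan is to exhibit, for every prime divisor $P\subseteq Y$, an element of $|dD|$ whose support does not contain $P$; this will force $\mathrm{Bs}|dD|$ to contain no divisor and hence to have codimension at least $2$. The construction goes through the cycle-theoretic pushforward $\tau_{\ast}$: since $\tau$ is proper and generically finite of degree $d$, the projection formula gives $\tau_{\ast}\tau^{\ast}D=dD$, and the norm map $K(W)^{\ast}\to K(Y)^{\ast}$ shows that $\tau_{\ast}$ sends principal divisors to principal divisors. Therefore any $E\in|\tau^{\ast}D|$ yields $\tau_{\ast}E\in|dD|$.

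Fix a prime divisor $P\subseteq Y$ and let $F_{1},\ldots,F_{m}$ be the irreducible components of $\tau^{-1}(P)$ that have codimension one in $W$ and dominate $P$. These are exactly the prime divisors on $W$ whose $\tau$-image equals $P$, so the support of $\tau_{\ast}E$ can contain $P$ only if $E$ contains some $F_{i}$ as a component. It therefore suffices to exhibit $E\in|\tau^{\ast}D|$ that does not contain any of the $F_{i}$.

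For each $i$, if every member of $|\tau^{\ast}D|$ contained $F_{i}$, then $F_{i}$ would be a fixed component of $|\tau^{\ast}D|$ and in particular $F_{i}\subseteq\mathrm{Bs}|\tau^{\ast}D|$, contradicting the hypothesis that $\tau^{\ast}D$ is base point free. Hence "containing $F_{i}$" is a proper closed condition in the finite-dimensional projective space $|\tau^{\ast}D|$, and since there are only finitely many $F_{i}$, a general $E\in|\tau^{\ast}D|$ avoids all of them. For such $E$, the divisor $\tau_{\ast}E\in|dD|$ satisfies $P\not\subseteq\mathrm{Supp}(\tau_{\ast}E)$, completing the argument.

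The only step requiring any care is the linear equivalence $\tau_{\ast}E\sim dD$, which rests on the classical fact that for proper generically finite morphisms between normal varieties the pushforward of a principal divisor is principal (via the norm on function fields). Once this is granted, the rest of the argument is formal: base point freeness of $\tau^{\ast}D$ prevents any prime divisor of $W$ from being a fixed component, and this is transferred to $Y$ by $\tau_{\ast}$.
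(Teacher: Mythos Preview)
Your proof is correct and shares the same core idea as the paper's---push forward a general member $E\in|\tau^{\ast}D|$ to obtain $\tau_{\ast}E\in|dD|$ and show the latter misses any prescribed prime divisor---but the execution differs. The paper takes the Stein factorization $W\xrightarrow{g}Y'\xrightarrow{h}Y$, observes that for $z\in Y\setminus\tau(\mathrm{Exc}(g))$ the fibre $\tau^{-1}(z)$ is finite, and then argues pointwise: a general $E$ avoids this finite set, whence $z\notin\mathrm{Supp}\,\tau_{\ast}E$, giving the explicit inclusion $\mathrm{Bs}|dD|\subseteq\tau(\mathrm{Exc}(g))$. You instead argue divisor-by-divisor directly from the cycle-theoretic definition of $\tau_{\ast}$: only the finitely many codimension-one components of $\tau^{-1}(P)$ dominating $P$ can contribute a $P$-coefficient to $\tau_{\ast}E$, and base point freeness prevents any of them from being fixed in $|\tau^{\ast}D|$. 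Your route is slightly more elementary in that it avoids Stein factorization; the paper's route yields the mild bonus of an explicit codimension-$\geq 2$ set containing $\mathrm{Bs}|dD|$.
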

\begin{proof}
Consider the Stein factorization of $\tau$
$$
\xymatrix{
W\ar[rr]^{\tau}\ar[rd]_{g}& &Y\\
 &Y'.\ar[ru]_{h}&
}
$$
The morphism $g$ is birational.
Let $z\in Y\backslash \tau({\rm Exc}(g))$.
Then the set $\tau^{-1}(z)$ is finite.
A general element $E\in|\tau^{\ast}D|$
is such that ${\rm Supp}E\cap\tau^{-1}(z)=\emptyset$,
hence $z\not\in{\rm Supp}\tau_{\ast}E$.
Since $\tau_{\ast}E\in|dD|$, we proved that
$${\rm Bs}|dD|\subseteq \tau({\rm Exc}(g))$$
which is a subset of codimension at least 2.
\end{proof}
\section{Proof of the main result}
This section is devoted to the proof of Theorem \ref{mainteo}.
We will be dealing with lc-trivial fibrations that are not klt-trivial fibrations.
The main tool is the subadjuction formula for centers that are minimal over the generic point of $Z$.
The next remark explains the geometric construction used in ~\cite{FG} and the notation
that will be used. 

\begin{oss}[cf. ~\cite{FG}]\label{constr}
{\rm
Let $f\colon(X,B)\rightarrow Z$ be an lc-trivial fibration such that $(X,B)$ is not klt over the generic point of $Z$.
Then there exists a center $W\subseteq X$ that is minimal among the centers $C$ of $(X,B)$
such that $f(C)=Z$.
Let $$
\xymatrix{
 W\ar[rd]_{\alpha}\ar[rr]^{f\vert_W}&& Z\\
 &Z_1\ar[ru]_{\beta}&
}
$$ be the Stein factorization of $f\vert_W$. Let $Z_1'$ be a desingularisation of $Z_1$.
Consider the lc-trivial fibration induced by the base change
$$
\xymatrix{
(X'_1,B'_1)\ar[d]_{f'_1}\ar[r]&(X,B)\ar[d]_{f}\\
Z'_1\ar[r]&Z.
}
$$
Let $W'_1$ be the preimage of $W$ in $X'_1$ and let $n\colon W'^n_1\rightarrow W'_1$
be its normalization.
The variety $W'^n_1$ is birational to $W$ and, by adjunction (see ~\cite[Proposition 3.9.2]{Corti}),
there exists $B^{W_1}$ such that
\begin{itemize}
\item the pair $(W'^n_1,B^{W_1})$ is klt;
\item we have $K_{W'^n_1}+B^{W_1}=n^{\ast}(K_{X'_1}+B)$.
\end{itemize}
Then the restriction of $f'_1$ induces a klt-trivial fibration
$$g_1=f'_1\circ n\colon (W'_1,B^{W_1})\rightarrow Z'_1$$
to whom we can associate a discriminant $B_{Z'_1}^{min}$
and a moduli part $M_{Z'_1}^{min}$.
If $B_{Z'_1}$ and $M_{Z'_1}$ are discriminant and moduli part of $f'_1$,
we have $$K_{Z'_1}+B_{Z'_1}+M_{Z'_1}\sim_{\mathbb{Q}}K_{Z'_1}+B^{min}_{Z'_1}+M^{min}_{Z'_1}.$$
}
\end{oss}
Though not explicitely stated, the following proposition is the key result in ~\cite{FG}.
\begin{teo}[Theorem 1.1 ~\cite{FG}]\label{fujigongyo}
Let $f\colon(X,B)\rightarrow Z$ be an lc-trivial fibration.
Assume that $(X,B)$ is not klt over the generic point of $Z$
and let $W$ be center of $(X,B)$
that is minimal over the generic point of $Z$ and such that $f(W)=Z$.\\
Assume that $M_Z$ is such that for any birational morphism $\mu\colon Z_1\rightarrow Z$
we have $M_{Z_1}=\mu^{\ast}M_Z$.\\
Then there exists a generically finite mophism $\tau\colon Z'\rightarrow Z$ with the following properties.
Let $f'\colon (X',B')\rightarrow Z'$ be the lc-trivial fibration
obtained by base change, let $W'$ be the preimage of $W$ in $X'$ and
let $n\colon W'^n\rightarrow W'$ be its normalization. Then
\begin{enumerate}
\item $g=f'\vert_{W'}\circ n\colon (W'^n,B^W)\rightarrow Z'$ is a klt-trivial fibration;
\item for any birational morphism $\mu\colon Z''\rightarrow Z'$ we have $M_{Z''}^{min}=\mu^{\ast}M_{Z'}^{min}$;
\item $B_{Z'}^{min}=B_{Z'}$.
\end{enumerate}
\end{teo}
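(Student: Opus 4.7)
The plan is to carry out the base change construction of Remark \ref{constr} and then refine it by a further generically finite morphism so that properties (2) and (3) hold. This is essentially the approach of ~\cite{FG}, which we reorganize to produce a single $\tau$ satisfying all three conclusions.

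First, apply Stein factorization to $f|_W$ to obtain $\alpha\colon W\to Z_1$ and $\beta\colon Z_1\to Z$, choose a resolution $Z_1'\to Z_1$, and base change $f$ along $Z_1'\to Z$ to produce $f_1'\colon(X_1',B_1')\to Z_1'$. Let $n\colon W_1'^n\to W_1'$ be the normalization of the preimage of $W$. Kawamata's subadjunction in the form of ~\cite[Proposition 3.9.2]{Corti} provides a divisor $B^{W_1}$ on $W_1'^n$ such that $(W_1'^n,B^{W_1})$ is klt and $K_{W_1'^n}+B^{W_1}=n^*(K_{X_1'}+B_1')|_{W_1'}$. Hence $g_1\colon(W_1'^n,B^{W_1})\to Z_1'$ is a klt-trivial fibration, giving (1). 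Property (2) is the birational stability of the moduli part of a klt-trivial fibration, which is Theorem \ref{nefness}(ii) applied to $g_1$; after replacing $Z_1'$ by a suitable birational model, which we absorb into $\tau$, we may assume it.

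For (3), the inequality $B_{Z'}^{min}\le B_{Z'}$ holds because restricting to $W'^n$ can only improve the log canonical singularities along fibers. To obtain equality, we analyze each prime divisor $P\subset Z'$. By the hypothesis on $M_Z$ and Proposition \ref{basechange}, both $B_{Z'}+M_{Z'}$ and $B_{Z'}^{min}+M_{Z'}^{min}$ are $\mathbb{Q}$-linearly equivalent representatives of the class $K_X+B$ modulo $f'^{*}K_{Z'}$, and by Remark \ref{constr} their sums agree $\mathbb{Q}$-linearly on $Z'$. Writing out the definition of $\gamma_P$ for $(X',B')$ and the corresponding threshold for $(W'^n,B^W)$, the discrepancy is controlled by the ramification of $\beta$ along $P$ and by the local structure of $W$ over $P$. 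The minimality of $W$ among centers with $f(W)=Z$ forces the log canonical threshold $\gamma_P$ to be realized by a log canonical center meeting $W'$, so after a further generically finite base change that tames the ramification of $\beta$ along each prime lying under $\mathrm{Supp}(B_{Z'}-B_{Z'}^{min})$, the two thresholds coincide and $B_{Z'}^{min}=B_{Z'}$.

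The main obstacle is the discriminant comparison in (3): the inequality is automatic, but the reverse requires an explicit matching of log canonical thresholds and careful bookkeeping of the ramification of the Stein factorization of $f|_W$. The minimality of $W$ is the crucial input, as it guarantees that the lc singularities of $(X,B)$ along $f^{-1}(P)$ are genuinely recorded by $W$ and not lost by the restriction. This is precisely the observation at the core of ~\cite{FG}, and we invoke it directly to conclude.
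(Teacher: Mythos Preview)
The paper does not supply its own proof of this statement: it is recorded as Theorem~1.1 of~\cite{FG}, prefaced by the comment that ``though not explicitly stated, the following proposition is the key result in~\cite{FG}.'' So there is nothing in the paper to compare your argument against beyond the construction of Remark~\ref{constr}, which you correctly reproduce for part~(1), and the invocation of Theorem~\ref{nefness} for part~(2).

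That said, your proposal is not a proof of part~(3); it is an outline that ends by deferring to~\cite{FG}. The sentence ``after a further generically finite base change that tames the ramification of $\beta$ along each prime lying under $\mathrm{Supp}(B_{Z'}-B_{Z'}^{min})$, the two thresholds coincide'' is precisely the assertion to be proved, and you give no mechanism for it. Concretely: you need to show that for each prime $P\subset Z'$ the log canonical threshold of $(X',B')$ over $P$ is computed by a divisor lying on $W'$, and that after restriction the threshold does not strictly increase. Minimality of $W$ tells you that the lc places of $(X,B)$ dominant over $Z$ and meeting $W$ are exactly those computing the adjunction on $W$, but it does not automatically say anything about lc thresholds over \emph{vertical} divisors $P$; one has to arrange (via base change, dlt modification, and a careful analysis of the stratification of lc centers as in~\cite{FG}) that over each $P$ the extremal lc place for $(X',B'+\gamma_P f'^{*}P)$ actually sits over $W'$. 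Your text acknowledges this (``This is precisely the observation at the core of~\cite{FG}, and we invoke it directly to conclude''), which means the proposal is a restatement of the claim rather than an argument for it.
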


Let us prove Theorem \ref{mainteo}.
\begin{proof}[proof of Theorem \ref{mainteo}]
Let $f\colon(X,B)\rightarrow Z$ be an lc-trivial fibration.
If $f$ is a klt-trivial fibration, we are done.
Then we can assume that $(X,B)$ is not klt over the generic point of $Z$.
Let $W$ be an arbitrary log
canonical center of $(X, B)$ that is dominant onto $Z$ and is minimal
over the generic point of $Z$.
By Theorem \ref{nefness}, we can assume that $M_Z$ is nef and that for any
birational morphism $\mu\colon Z_1\rightarrow Z$
we have $M_{Z_1}=\mu^{\ast}M_Z$.
Let $\tau\colon Z'\rightarrow Z$ be a generically finite base change
as in Proposition \ref{fujigongyo}.
Then, by using the same notation as in Remark \ref{constr}, we have $M^{min}_{Z'}\sim_{\mathbb{Q}}M_Z$.
By hypothesis there exists a birational morphism $\nu\colon Z''\rightarrow Z'$
such that $M^{min}_{Z''}$ is semiample.
Let $m$ be an integer such that 
\begin{itemize}
\item $mM^{min}_{Z''}$ is base point free;
\item $mM^{min}_{Z'}\sim mM_{Z'}$.
\end{itemize}
Set $\tilde{\tau}=\tau\circ\nu$. The morphism $\tilde{\tau}$ is generically finite
and let $d=\deg \tilde{\tau}$.
By Lemma \ref{geq2} $${\rm Codim}({\rm Bs}|mdM_Z|)\geq 2.$$
Let $\mu\colon\bar{Z}\rightarrow Z$ be a resolution of the base locus of $|mdM_Z|$.
Then $$|\mu^{\ast}(mdM_Z)|=|M|+F,$$
the linear system $|M|$ is base point free and $F$ is the base locus of $|\mu^{\ast}(mdM_Z)|$.
Let $\bar{Z}'$ be a desingularisation of the main component of the fiber product $Z''\times_{Z}\bar{Z}$.
We have the following commutative diagram:
$$
\xymatrix{
Z'' \ar[r]^{\tilde{\tau}} & Z\\
\bar{Z}''\ar[u]^{\mu''}\ar[r]_{\sigma} & \bar{Z}\ar[u]_{\mu}
}
$$
Since the morphism $\mu''$ is birational, we have $M_{\bar{Z}''}=\mu''^{\ast}M_{Z''}$
and $M_{\bar{Z}''}$ is semiample.
Moreover the degree of $\sigma$ is the same as the degree of $\tilde{\tau}$
and $M_{\bar{Z}''}=\sigma^{\ast}M_{\bar{Z}}$.
Then, by Lemma \ref{geq2}, we have $${\rm Codim}({\rm Bs}|mdM_{\bar{Z}}|)\geq 2.$$
On the other hand, since $$|mdM_{\bar{Z}}|=\mu^{\ast}|mdM_Z|,$$
the base locus of $mdM_{\bar{Z}}$ has pure codimension 1.
This implies that $F=0$ and that the linear system $|mdM_Z|$ is base point free.
\end{proof}

\addcontentsline{toc}{chapter}{Bibliography}

\end{document}